\theoremstyle{plain}
\newtheorem{thm}{Theorem}[section]
\newtheorem{cor}[thm]{Corollary}
\theoremstyle{definition}
\newtheorem{dfn}[thm]{Definition}
\newtheorem{rem}[thm]{Remark}
\newtheorem{dfns-rems}[thm]{Definitions and Remarks}
\newtheorem{notas-rems}[thm]{Notations and Remarks}
\newtheorem{exmps-rems}[thm]{Examples and Remarks}
\newtheorem{cons}[thm]{Construction}
\DeclareMathOperator{\ind-match}{ind-match}
\DeclareMathOperator{\ord-match}{ord-match}
\DeclareMathOperator{\sord-match}{{\it s}-ord-match}
\DeclareMathOperator{\1ord-match}{{\it 1}-ord-match}
\DeclareMathOperator{\2ord-match}{{\it 2}-ord-match}
\DeclareMathOperator{\3ord-match}{{\it 3}-ord-match}
\DeclareMathOperator{\mord-match}{{\it m}-ord-match}
\begin{document}


\title[depth stability of symbolic powers of cover ideals]{On the index of depth stability of symbolic powers of cover ideals of graphs}


\author[S. A. Seyed Fakhari]{S. A. Seyed Fakhari}

\address{S. A. Seyed Fakhari, Departamento de Matem\'aticas\\Universidad de los Andes\\Bogot\'a\\Colombia.}

\email{s.seyedfakhari@uniandes.edu.co}

\author[S. Yassemi]{S. Yassemi}

\address{S. Yassemi, Department of Mathematics, Purdue University, West Lafayette, IN, USA}

\email{syassemi@purdue.edu}


\dedicatory{Dedicated with gratitude to our friend Ngo Viet Trung \\ on the occasion of his 70th birthday}


\begin{abstract}
Let $G$ be a graph with $n$ vertices and let $S=\mathbb{K}[x_1,\dots,x_n]$ be the
polynomial ring in $n$ variables over a field $\mathbb{K}$. Assume that $I(G)$ and $J(G)$ denote the edge ideal and the cover ideal of $G$, respectively. We provide a combinatorial upper bound for the index of depth stability of symbolic powers of $J(G)$. As a consequence, we compute the depth of symbolic powers of cover ideals of fully clique-whiskered graphs. Meanwhile, we determine a class of graphs $G$ with the property that the Castelnuovo--Mumford regularity of $S/I(G)$ is equal to the induced matching number of $G$.
\end{abstract}


\subjclass[2020]{Primary: 13C15, 05E40; Secondary: 13C13}


\keywords{Castelnuovo--Mumford regularity, Cover ideal, Depth, Symbolic power, Ordered matching number}


\thanks{}


\maketitle


\section{Introduction} \label{sec1}

Let $\mathbb{K}$ be a field and $S=\mathbb{K}[x_1,\dots,x_n]$ be the
polynomial ring in $n$ variables over the field $\mathbb{K}$, and let
$I\subset S$ be a monomial ideal. Brodmann \cite{b} proves that ${\rm depth}(S/I^k)$ is constant for large $k$. The smallest integer $k\geq 1$ such that ${\rm depth}(S/I^m)=\lim_{t\rightarrow \infty}{\rm depth}(S/I^t)$ for all $m\geq k$ is called the {\it index of depth stability of powers} of $I$ and is denoted by ${\rm dstab}(I)$. It is of great interest to compute the limit of the sequence $\{{\rm depth}(S/I^k)\}_{k=1}^{\infty}$ and to determine or bound its index of stability. It is also natural to consider similar problems for symbolic powers. More precisely, for every integer $k\geq 1$, let $I^{(k)}$ denote the $k$-th symbolic power of $I$. It follows from \cite[Theorem 4.7]{ht2} that the sequence $\{{\rm depth}(S/I^{(k)})\}_{k=1}^{\infty}$ is convergent when $I$ is a squarefree monomial ideal. For any squarefree monomial ideal of $S$, the smallest integer $k\geq 1$ such that ${\rm depth}(S/I^{(m)})=\lim_{t\rightarrow \infty}{\rm depth}(S/I^{(t)})$ for all $m\geq k$ is called the {\it index of depth stability of symbolic powers} of $I$ and is denoted by ${\rm sdstab}(I)$. In this paper, we study the index of depth stability of symbolic powers of cover ideals of graphs.

For any graph $G$, let $J(G)$ denote its cover ideal. The depth of high symbolic powers of cover ideals of graphs has been studied by Hoa et al. \cite{hktt}. In that paper, the authors provide a combinatorial description for the limit value of the depth of symbolic powers of cover ideal of graphs. In fact, they prove something stronger. They show in \cite[Theorem 3.4]{hktt} that for any graph $G$ with $n$ vertices and for every integer $k\geq 2\ord-match(G)-1$, we have$${\rm depth}(S/J(G)^{(k)})=n-\ord-match(G)-1.$$Here, $\ord-match(G)$ denotes the ordered matching number of $G$ (see Definition \ref{om}). In particular, it follows from this result that ${\rm sdstab}(J(G))\leq 2\ord-match(G)-1$. In Theorem \ref{main}, we provide a refinement of this inequality. Indeed, for every graph $G$ and for every integer $s\geq 1$, we define the notion of $s$-ordered matching number of $G$, denoted by $\sord-match(G)$ (see Definition \ref{som}). It is proven in Theorem \ref{main} that if $\sord-match(G)=\ord-match(G)$, for some integer $s\geq 2$, then$${\rm sdstab}(J(G))\leq 2\ord-match(G)-2s+2.$$As a consequence of the above inequality, in Corollary \ref{whisk}, we compute the depth of symbolic powers of cover ideals of fully clique-whiskered graphs. In particular, we will see that for any such graph, we have ${\rm sdstab}(J(G))\leq 2$. For a graph $G$, let $I(G)$ denote its edge ideal. As the next main result, we use the idea of the proof Theorem \ref{main} to determine a class of graphs such that the Castelnuovo--Mumford regularity of $S/I(G)$ is equal to the induced matching number of $G$.

Let $G$ be a bipartite graph. We know from \cite[Corollary 2.6]{grv} that for any integer $k\geq 1$, the equality $J(G)^{(k)}=J(G)^k$ holds. In particular, ${\rm dstab}(J(G))={\rm sdstab}(J(G))$. In this case, Hang and Trung \cite[Theorem 3.6]{ht} strengthen \cite[Theorem 3.4]{hktt} by proving the inequality$${\rm sdstab}(J(G))\leq \ord-match(G).$$ In Theorem \ref{bipart}, we provide an alternative proof for this inequality. Indeed, the proof in \cite{ht} is based on a formula due to Takayama \cite[Theorem 2.2]{t}, while our proof follows from a polarization argument.


\section{Preliminaries} \label{sec1'}

In this section, we provide the definitions and the basic facts which will be used in the next section.

Let $G$ be a graph with vertex set $V(G)=\big\{x_1, \ldots,
x_n\big\}$ and edge set $E(G)$ (by abusing the notation, we identify the vertices of $G$ with the variables of $S$). A subgraph $H$ of $G$ is called {\it induced} provided that two vertices of $H$ are adjacent if and only if they are adjacent in $G$. A subset $W$ of $V(G)$ is called an {\it independent subset} of $G$ if there are no edges among the vertices of $W$. The cardinality of the largest independent subset of $G$ is the {\it independence number} of $G$, and will be denoted by $\alpha(G)$. A subset $C$ of $V(G)$ is called a {\it vertex cover} of the graph $G$ if every edge of $G$ is incident to at least one vertex of $C$. A vertex cover $C$ is called a {\it minimal vertex cover} of $G$ if no proper subset of $C$ is a vertex cover of $G$. A {\it matching} in $G$ is a subgraph consisting of pairwise disjoint edges. If the subgraph is an induced subgraph, the matching is an {\it induced matching}. The cardinality of the maximum induced matching of $G$ is the {\it induced matching number} of $G$ and will be denoted by $\ind-match(G)$.

Next, we define the notion of ordered matching for a graph. It was introduced in \cite{cv} and plays a key role in this paper.

\begin{dfn} \label{om}
Let $G$ be a graph, and let $M=\big\{\{a_i,b_i\}\mid 1\leq i\leq r\big\}$ be a
nonempty matching of $G$. We say that $M$ is an {\it ordered matching} of
$G$ if the following conditions hold:
\begin{itemize}
\item[(1)] $A:=\{a_1,\ldots,a_r\} \subseteq V(G)$ is an
    independent subset of $G$; and

\item[(2)] $\{a_i, b_j\}\in E(G)$ implies that $i\leq j$.
\end{itemize}
The {\it ordered matching number} of $G$, denoted by $\ord-match(G)$, is
defined to be $$\ord-match(G)=\max\{|M|\mid M\subseteq E(G)\ {\rm is\ an\
ordered\ matching\ of} \ G\}.$$
\end{dfn}

In the following definition, we provide a refinement of the notion of ordered matching.

\begin{dfn} \label{som}
Let $G$ be a graph, and let $M=\big\{\{a_i,b_i\}\mid 1\leq i\leq r\big\}$ be a
nonempty matching of $G$. For an integer $s\geq 1$, we say that $M$ is an {\it s-ordered matching} of
$G$ if the following conditions hold:
\begin{itemize}
\item[(1)] $r\geq s$;

\item[(2)] $A:=\{a_1,\ldots,a_r\} \subseteq V(G)$ is an
    independent subset of $G$; and

\item[(3)] $\{a_i, b_j\}\in E(G)$ implies that either $i=j$ or $i\leq j-s$ (in particular, if $r=s$, then $\{a_i, b_j\}\in E(G)$ implies that $i=j$).
\end{itemize}
The {\it s-ordered matching number} of $G$, denoted by $\sord-match(G)$, is
defined to be $$\sord-match(G)=\max\{|M|\mid M\subseteq E(G)\ {\rm is\ an\
{\it s}-ordered\ matching\ of} \ G\}.$$If $G$ has no $s$-ordered matching, then we define its $s$-ordered matching number to be $-\infty$.
\end{dfn}

It follows from the definition that$$\ord-match(G)=\1ord-match(G)\geq \2ord-match(G)\geq \3ord-match(G)\geq \cdots.$$

The edge ideal $I(G)$ of $G$ is the ideal of $S$ generated by the squarefree  monomials  $x_ix_j$, where $\{x_i, x_j\}$ is an edge of $G$. The Alexander dual of the edge ideal of $G$ in $S$, i.e., the
ideal $$J(G)=I(G)^{\vee}=\bigcap_{\{x_i,x_j\}\in E(G)}(x_i,x_j),$$ is called the
{\it cover ideal} of $G$ in $S$. The reason for this name is due to the
well-known fact that the generators of $J(G)$ correspond to minimal vertex covers of $G$. We refer to \cite{hv2} and \cite{s13} for surveys about homological properties of powers of cover ideals.

Let $I$ be an ideal of $S$ and let ${\rm Min}(I)$ denote the set of minimal primes of $I$. For every integer $k\geq 1$, the $k$-th {\it symbolic power} of $I$,
denoted by $I^{(k)}$, is defined to be$$I^{(k)}=\bigcap_{\frak{p}\in {\rm Min}(I)} {\rm Ker}(S\rightarrow (S/I^k)_{\frak{p}}).$$Let $I$ be a squarefree monomial ideal in $S$ and suppose that $I$ has the irredundant
primary decomposition $$I=\frak{p}_1\cap\ldots\cap\frak{p}_r,$$ where every
$\frak{p}_i$ is an ideal of $S$ generated by a subset of the variables of
$S$. It follows from \cite[Proposition 1.4.4]{hh} that for every integer $k\geq 1$, $$I^{(k)}=\frak{p}_1^k\cap\ldots\cap
\frak{p}_r^k.$$In particular, for any graph $G$ and for each integer $k\geq 1$, we have$$J(G)^{(k)}=\bigcap_{\{x_i,x_j\}\in E(G)}(x_i,x_j)^k.$$

Let $M$ be a finitely generated graded $S$-Module. The {\it projective dimension} and the {\it Castelnuovo-Mumford regularity} (or simply, {\it regularity}) of $M$, are defined as follows.
$${\rm pd}(M)=\max\{i|\ {\rm Tor}_i^S(\mathbb{K}, M)\neq0\}, \ \ \ \ {\rm reg}(M)=\max\{j-i|\ {\rm Tor}_i^S(\mathbb{K}, M)_j\neq0\}.$$

Let $I$ be a monomial ideal of
$S$ with minimal generators $u_1,\ldots,u_m$,
where $u_j=\prod_{i=1}^{n}x_i^{a_{i,j}}$, $1\leq j\leq m$. For every $i$
with $1\leq i\leq n$, let $a_i=\max\{a_{i,j}\mid 1\leq j\leq m\}$, and
suppose that $$T=\mathbb{K}[x_{1,1},x_{1,2},\ldots,x_{1,a_1},x_{2,1},
x_{2,2},\ldots,x_{2,a_2},\ldots,x_{n,1},x_{n,2},\ldots,x_{n,a_n}]$$ is a
polynomial ring over the field $\mathbb{K}$. Let $I^{{\rm pol}}$ be the squarefree
monomial ideal of $T$ with minimal generators $u_1^{{\rm pol}},\ldots,u_m^{{\rm pol}}$, where
$u_j^{{\rm pol}}=\prod_{i=1}^{n}\prod_{k=1}^{a_{i,j}}x_{i,k}$, $1\leq j\leq m$. The ideal $I^{{\rm pol}}$
is called the {\it polarization} of $I$. We know from \cite[Corollary 1.6.3]{hh} that polarization preserves the projective dimension, i.e.,$${\rm pd}(S/I)={\rm pd}(T/I^{{\rm pol}}).$$

\begin{cons} [\cite{s3}, Page 101] \label{cons}
Let $G$ be a graph with vertex set $V(G)=\big\{x_1, \ldots,
x_n\big\}$ and edge set $E(G)$. For each integer $k\geq 1$, we define the graph $G_k$ to be the graph with the vertex set $V(G_k)=\{x_{i,p}\mid 1\leq i\leq n,  1\leq p\leq k\}$ and the edge set$$E(G_k)=\big\{\{x_{i,p}, x_{j,q}\}\mid \{x_i, x_j\}\in E(G) \  {\rm and} \  p+q\leq k+1\big\}.$$By \cite[Lemma 3.4]{s3}, we have $(J(G)^{(k)})^{{\rm pol}}=J(G_k)$.
\end{cons}


\section{Main results} \label{sec2}

Let $G$ be a graph with $n$ vertices. As the first main result of this paper, we determine a combinatorial upper bound for the index of depth stability of symbolic powers of cover ideals of $G$. Indeed, we know from \cite[Theorem 3.4]{hktt} that$${\rm sdstab}(J(G))\leq 2\ord-match(G)-1.$$In the following theorem, we provide an improvement of the above inequality.

\begin{thm} \label{main}
Let $G$ be a graph with $n$ vertices and suppose that $s\geq 1$ is the largest integer such that $\sord-match(G)=\ord-match(G)$.
\begin{itemize}
\item[(i)] If $s=1$, then for every integer $k$ with $k\geq 2\ord-match(G)-1$, we have ${\rm depth}(S/J(G)^{(k)})=n-\ord-match(G)-1$.

\item[(ii)] If $s\geq 2$, then for every integer $k$ with $k\geq 2\ord-match(G)-2s+2$, we have ${\rm depth}(S/J(G)^{(k)})=n-\ord-match(G)-1$.
\end{itemize}
\end{thm}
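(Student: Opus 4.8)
The plan is to reduce the statement to a depth computation for the polarized ideal $J(G_k)$ via Construction \ref{cons}, and then to bound $\operatorname{depth}(T/J(G_k))$ from below and above. Recall that $(J(G)^{(k)})^{\mathrm{pol}}=J(G_k)$ and polarization preserves projective dimension, hence (by Auslander--Buchsbaum) $\operatorname{depth}(S/J(G)^{(k)})=\operatorname{depth}(T/J(G_k))-(\text{number of new variables})$. Since $|V(G_k)|=nk$ while $J(G)^{(k)}$ lives in $S$ with $n$ variables, the bookkeeping gives $\operatorname{depth}(S/J(G)^{(k)})=\operatorname{depth}(T/J(G_k))-n(k-1)$. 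By \cite[Theorem 3.4]{hktt}, for $k\geq 2\ord\text{-}\mathrm{match}(G)-1$ we already know $\operatorname{depth}(S/J(G)^{(k)})=n-\ord\text{-}\mathrm{match}(G)-1$, so the content of part (ii) is to push the threshold down from $2\ord\text{-}\mathrm{match}(G)-1$ to $2\ord\text{-}\mathrm{match}(G)-2s+2$, i.e. over the range $2\ord\text{-}\mathrm{match}(G)-2s+2\leq k\leq 2\ord\text{-}\mathrm{match}(G)-2$.

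The key is the translation $\operatorname{depth}(T/J(G_k)) = |V(G_k)| - \operatorname{pd}(T/J(G_k)) = nk - \operatorname{bight}(I(G_k))$ by Terai's formula and the Lyubeznik--type duality for edge ideals of graphs (equivalently, using that $\operatorname{pd}(T/J(H))=\operatorname{bight}(I(H))$ for any graph $H$, since $J(H)^\vee=I(H)$ and $\operatorname{pd}(S/I^\vee)=\operatorname{reg}$-type duality is not what we want — instead use that $J(H)$ has a linear resolution is false in general, so the cleanest route is: $\operatorname{depth}(T/J(G_k)) = nk - \operatorname{pd}(T/J(G_k))$, and bound $\operatorname{pd}$ combinatorially). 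Concretely, I would use that $\ord\text{-}\mathrm{match}(G_k)$ controls $\operatorname{depth}(T/J(G_k))$ through the already-cited machinery applied to the graph $G_k$ itself (which is bipartite-like in its layered structure), reducing everything to the single combinatorial claim:
\[
\ord\text{-}\mathrm{match}(G_k) = \ord\text{-}\mathrm{match}(G) \quad\text{for } k\geq 2\ord\text{-}\mathrm{match}(G)-2s+2,
\]
under the hypothesis that $s$ is the largest integer with $\sord\text{-}\mathrm{match}(G)=\ord\text{-}\mathrm{match}(G)$. The inequality $\ord\text{-}\mathrm{match}(G_k)\geq \ord\text{-}\mathrm{match}(G)$ is automatic from the copies $x_{i,1}\mapsto x_i$. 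For the reverse inequality I would take an ordered matching $M=\{\{a_i,b_i\}\}$ in $G_k$ of maximum size, project it down to $G$ by forgetting second indices, analyze the collisions (two matched vertices of $G_k$ projecting to the same vertex of $G$), and use the constraint $p+q\leq k+1$ together with the ordered-matching ordering to extract an $s$-ordered matching of $G$ whose size forces $|M|\le \ord\text{-}\mathrm{match}(G)$ — this is exactly where the $s$-ordered condition (indices either equal or differing by at least $s$) enters, because a long "chain" in $G_k$ using several layers of the same edge of $G$ translates into the gap-$s$ spacing in Definition \ref{som}(3), and the threshold on $k$ is precisely what is needed for such a chain to fit inside the staircase $p+q\leq k+1$.

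I expect the main obstacle to be this reverse inequality, i.e. controlling how an ordered matching of $G_k$ can exploit the extra layers without exceeding $\ord\text{-}\mathrm{match}(G)$ once $k$ is in the stated range; in particular one must rule out that some clever use of vertices $x_{i,p}$ with $p\ge 2$ produces an ordered matching of $G_k$ strictly larger than $\ord\text{-}\mathrm{match}(G)$, and the hypothesis "$s$ is the \emph{largest} integer with $\sord\text{-}\mathrm{match}(G)=\ord\text{-}\mathrm{match}(G)$" should be used to show that any attempt to do so would yield an $(s+1)$-ordered matching of $G$ of full size, a contradiction. Once the combinatorial equality $\ord\text{-}\mathrm{match}(G_k)=\ord\text{-}\mathrm{match}(G)$ is established for $k$ in range, part (ii) follows by feeding it back through \cite[Theorem 3.4]{hktt} (or its proof) applied layer-by-layer, and part (i) is just the $s=1$ specialization recovering the known bound. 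A secondary, more routine obstacle is making the depth/pd bookkeeping under polarization fully precise, including checking that the lower bound $\operatorname{depth}(S/J(G)^{(k)})\geq n-\ord\text{-}\mathrm{match}(G)-1$ holds for all $k$ (not just large $k$), which can be done by the same $G_k$ analysis since $\ord\text{-}\mathrm{match}(G_k)$ is monotone in $k$ and bounded above once we know the equality.
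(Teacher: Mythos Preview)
Your proposal correctly identifies the polarization route through $G_k$, but the combinatorial core is misdirected. The paper does not compute or bound $\ord-match(G_k)$, and \cite[Theorem 3.4]{hktt} applied to the graph $G_k$ would only control $\operatorname{depth}(T/J(G_k)^{(m)})$ for large $m$, not $\operatorname{depth}(T/J(G_k))$ itself, which is what you actually need. Even if the equality $\ord-match(G_k)=\ord-match(G)$ were established, it would at best yield the lower bound $\operatorname{depth}(S/J(G)^{(k)})\geq n-\ord-match(G)-1$ (via the general inequality ${\rm reg}(I(H))\leq \ord-match(H)+1$), and that bound is already available for every $k\geq 1$ directly from \cite[Proposition 2.4]{v} and \cite[Theorem 2.8]{cv}; it does not give the upper bound on depth, which is the entire content of part~(ii).

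The actual argument uses precisely the Terai duality ${\rm pd}(T/J(G_k))={\rm reg}(I(G_k))$ that you mention and then set aside, combined with Katzman's inequality ${\rm reg}(T/I(G_k))\geq \ind-match(G_k)$. The heart of the proof is therefore to show $\ind-match(G_k)\geq t:=\ord-match(G)$, and this is done by an explicit \emph{construction}: starting from an $s$-ordered matching $\{\{x_i,x_{t+i}\}:1\le i\le t\}$ of $G$, one exhibits in $G_k$ the edges $\{x_{i,\,t+2-s-i},\,x_{t+i,\,k+s+i-t-1}\}$ for $1\le i\le t-s$ together with $\{x_{i,1},x_{t+i,k}\}$ for $t-s+1\le i\le t$, and verifies case by case that this is an induced matching. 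The gap-$s$ condition in Definition~\ref{som}(3) and the hypothesis $k\geq 2t-2s+2$ are exactly what make the index arithmetic go through. No projection-and-collision analysis from $G_k$ back to $G$ is involved, and the maximality of $s$ plays no role in the argument --- it is only there to state the sharpest threshold.
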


\begin{proof}
(i) is known by \cite[Theorem 3.4]{hktt}. So, we prove (ii).

Without loss of generality suppose that $G$ has no isolated vertex. Combining \cite[Proposition 2.4]{v} and \cite[Theorem 2.8]{cv}, we have$${\rm depth}(S/J(G)^{(k)})\geq n-\ord-match(G)-1,$$for every integer $k\geq 1$. Thus, we prove that$${\rm depth}(S/J(G)^{(k)})\leq n-\ord-match(G)-1,$$for every integer $k\geq 2\ord-match(G)-2s+2$. So, assume that$$k\geq 2\ord-match(G)-2s+2$$is an integer and suppose that $V(G)=\{x_1, \ldots, x_n\}$ is the vertex set of $G$. Using the Auslander-Buchsbaum formula, we need to show that$${\rm pd}(S/J(G)^{(k)})\geq \ord-match(G)+1.$$As polarization preserves the projective dimension, it is enough to prove that$${\rm pd}(T/(J(G)^{(k)})^{{\rm pol}})\geq \ord-match(G)+1,$$where $T$ is a new polynomial ring containing $(J(G)^{(k)})^{{\rm pol}}$. We know from \cite[Lemma 3.4]{s3} that $(J(G)^{(k)})^{{\rm pol}}$ is the cover ideal of the graph $G_k$ introduced in Construction \ref{cons}. Using Terai's theorem \cite[Theorem 8.1.10]{hh}, it is sufficient to show that $${\rm reg}(T/(I(G_k))\geq \ord-match(G).$$ By \cite[Lemma 2.2]{k}, we know that$${\rm reg}(T/(I(G_k))\geq \ind-match(G_k).$$We show that $\ind-match(G_k)\geq \ord-match(G)$ and this completes the proof. Set $t=\ord-match(G)$ and assume without loss of generality that $\big\{\{x_i,x_{t+i}\}\mid 1\leq i\leq t\big\}$ is an ordered matching of $G$. Since $\sord-match(G)=\ord-match(G)$, we may suppose that
\begin{itemize}
\item[$\bullet$] $t\geq s$;
\item[$\bullet$] $\{x_1, \ldots, x_t\}$ is an independent subset of
     vertices of $G$; and
\item[$\bullet$] $\{x_i, x_{t+j}\}\in E(G)$ implies that either $i=j$ or $i\leq j-s$.
\end{itemize}

Set
\begin{align*}
M:=& \Big\{\{x_{i,t+2-s-i}, x_{t+i, k+s+i-t-1}\}\mid 1\leq i\leq t-s\Big\}\cup\\ & \Big\{\{x_{i,1}, x_{t+i, k}\}\mid t-s+1\leq i\leq t\Big\}.
\end{align*}
We show that $M$ is an induced matching of $G_k$. Fix integers $i$ and $j$ with $1\leq i <j\leq t$. Consider the following cases.

\vspace{0.3cm}
{\bf Case 1.}  Suppose $1\leq i,j\leq t-s$. Since the vertices $x_i$ and $x_j$ are not adjacent in $G$, it follows from the construction of $G_k$ that $\{x_{i,t+2-s-i}, x_{j,t+2-s-j}\}\notin E(G_k)$. Since $i<j$, we conclude that $\{x_j, x_{t+i}\}\notin E(G)$ and it again follows from the construction of $G_k$ that $\{x_{j,t+2-s-j}, x_{t+i, k+s+i-t-1}\}$ is not an edge of $G_k$.

If $j<i+s$, then $x_i$ and $x_{t+j}$ are not adjacent in $G$. Therefore, the construction of $G_k$ implies that $\{x_{i,t+2-s-i}, x_{t+j, k+s+j-t-1}\}$ is not an edge of $G_k$. If $j\geq i+s$, then$$(t+2-s-i)+(k+s+j-t-1)=k+j-i+1\geq k+s+1> k+1.$$Hence, $\{x_{i,t+2-s-i}, x_{t+j, k+s+j-t-1}\}$ is not an edge of $G_k$.

Since $k\geq 2t-2s+2$, $i\geq 1$ and $j\geq 2$, we have$$(k+s+i-t-1)+(k+s+j-t-1)=k+(k-2t+2s-2)+i+j> k+2.$$Thus, $\{x_{t+i,k+s+i-t-1}, x_{t+j,k+s+j-t-1}\}$ is not an edge of $G_k$.

\vspace{0.3cm}
{\bf Case 2.} Suppose $1\leq i\leq t-s$ and $j\geq t-s+1$. Since the vertices $x_i$ and $x_j$ are not adjacent in $G$, we have $\{x_{i,t+2-s-i}, x_{j,1}\}\notin E(G_k)$. Since, $i<j$, we conclude that $\{x_j, x_{t+i}\}\notin E(G)$ and so, $\{x_{j,1}, x_{t+i, k+s+i-t-1}\}$ is not an edge of $G_k$.

If $j<i+s$, then $x_i$ and $x_{t+j}$ are not adjacent in $G$. Therefore, the construction of $G_k$ implies that $\{x_{i,t+2-s-i}, x_{t+j, k}\}$ is not an edge of $G_k$. If $j\geq i+s$, then it follows from $i\leq t-s$ that$$(t+2-s-i)+k\geq k+2.$$Hence, $\{x_{i,t+2-s-i}, x_{t+j, k}\}$ is not an edge of $G_k$.

Since $k\geq 2t-2s+2$ and $i\geq 1$, we have
\begin{align*}
& k+(k+s+i-t-1)\geq k+(2t-2s+2+s+1-t-1)\\ & =k+t-s+2\geq k+2.
\end{align*}
Thus, $\{x_{t+i,k+s+i-t-1}, x_{t+j,k}\}$ is not an edge of $G_k$.

\vspace{0.3cm}
{\bf Case 3.} Suppose $i,j\geq t-s+1$. Since the vertices $x_i$ and $x_j$ are not adjacent in $G$, we have $\{x_{i,1}, x_{j,1}\}\notin E(G_k)$. Since, $i<j$, we conclude that $\{x_j, x_{t+i}\}\notin E(G)$ and so, $\{x_{j,1}, x_{t+i,k}\}$ is not an edge of $G_k$.

The inequalities $i\geq t-s+1$ and $j\leq t$ imply that $j<i+s$. Therefore, $x_i$ and $x_{t+j}$ are not adjacent in $G$. Hence, $\{x_{i,1}, x_{t+j,k}\}$ is not an edge of $G_k$. Since, $k\geq 2$, we have $2k\geq k+2$. In particular,  $\{x_{t+i,k}, x_{t+j,k}\}$ is not an edge of $G_k$.

\vspace{0.3cm}
It follows from the above cases that $M$ is an induced matching of $G_k$. Thus,$$\ind-match(G_k)\geq t=\ord-match(G)$$and the assertion follows.
\end{proof}

Cook and Nagel \cite{cn} defined the concept of fully clique-whiskered graphs in the following way. For a
given graph $G$, a subset $W\subseteq V(G)$ is called a {\it clique} of $G$ if every
pair of vertices of $W$ are adjacent in $G$. Let $\pi$ be a partition of
$V(G)$, say $V(G)=W_1\cup\cdots \cup W_{m}$, such that $W_i$ is a clique of
$G$ for every $1\leq i \leq m$. Then we say that $\pi=\{W_1, \ldots, W_{m}\}$ is a {\it clique vertex-partition} of $G$. Add new vertices $y_1,\ldots,y_{m}$ and new edges $\{x, y_i\}$ for every $x\in W_i$ and every $1\leq i\leq m$. The resulting
graph is called a {\it fully clique-whiskered graph} of $G$, denoted by $G^{\pi}$. Using Theorem \ref{main}, we are able to determine the depth of symbolic powers of cover ideals of fully clique-whiskered graphs.

\begin{cor} \label{whisk}
Let $G$ be a graph and suppose that $\pi=\{W_1, \ldots, W_{m}\}$ is a clique vertex-partition of $G$. Then
$${\rm depth}(S/J(G^{\pi})^{(k)})=
\left\{
	\begin{array}{ll}
		n+m-\alpha(G)-1  & \mbox{if } k=1 \\
		n-1 & \mbox{if } k\geq 2
	\end{array}
\right.$$
In particular, ${\rm sdstab}(J(G^{\pi}))\leq 2$.
\end{cor}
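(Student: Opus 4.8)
The plan is to identify the relevant invariants of $G^{\pi}$ and then feed them into Theorem \ref{main}. Let $n$ denote the number of vertices of $G$ and let $G^{\pi}$ have $n+m$ vertices (the original $n$ vertices together with the $m$ new whisker vertices $y_1,\dots,y_m$). The first step is to compute $\ord-match(G^{\pi})$. I claim $\ord-match(G^{\pi}) = \alpha(G)$. For the lower bound, choose a maximum independent set $\{x_{i_1},\dots,x_{i_\alpha}\}$ of $G$; each $x_{i_j}$ lies in exactly one block $W_{\ell(j)}$ of the partition, and distinct independent vertices can share a block only if... actually they cannot, since each $W_\ell$ is a clique, so the $\alpha$ chosen vertices lie in $\alpha$ distinct blocks. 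Then $\big\{\{x_{i_j}, y_{\ell(j)}\}\mid 1\le j\le \alpha\big\}$ is a matching whose $x$-side is independent in $G^{\pi}$ (the $y$'s are not involved, and there are no edges among the $x_{i_j}$), and since each $y_{\ell(j)}$ is adjacent only to vertices of $W_{\ell(j)}$, an edge $\{x_{i_j}, y_{\ell(j')}\}$ forces $j=j'$; ordering the pairs arbitrarily gives an ordered matching of size $\alpha$. In fact this argument shows more: it produces an $s$-ordered matching for \emph{every} $s$, because condition (3) of Definition \ref{som} is satisfied with $i=j$ always. For the upper bound $\ord-match(G^{\pi})\le \alpha(G)$, I would invoke the general fact (used implicitly via \cite{cv}, or provable directly) that the $a_i$-vertices of any ordered matching form an independent set, so $\ord-match(H)\le\alpha(H)$ for any graph $H$; and $\alpha(G^{\pi})=\alpha(G)$ since adding the whiskers to a clique partition does not increase the independence number — any independent set of $G^{\pi}$ uses at most one vertex of $W_i\cup\{y_i\}$ from each block (as $W_i\cup\{y_i\}$ is itself a clique), giving an independent set of size $\le m$, and one checks this matches $\alpha(G)$ when $G$ has the partition $\pi$. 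Actually the cleanest route: $\alpha(G^\pi)=m$ always (pick one $y_i$ per block when the block is nonempty, or note each of the $m$ blocks $W_i\cup\{y_i\}$ is a clique hence contributes at most one vertex, and the $m$ vertices $y_1,\dots,y_m$ are independent), and simultaneously $\alpha(G)\ge$ (number of nonempty $W_i$) by a greedy choice — hmm, this needs care, so I would instead directly show $\ord-match(G^\pi)=\alpha(G)$ by the matching construction above for the lower bound and a short covering argument for the upper bound.

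Granting $\ord-match(G^{\pi})=\alpha(G)=:t$, the second step observes that the construction in the previous paragraph yields a $t$-ordered matching of $G^{\pi}$ (all edges $\{x_i,y_j\}$ in it satisfy $i=j$), so $\sord-match(G^{\pi})=\ord-match(G^{\pi})$ for $s=t$, and hence the largest such $s$ equals $t$. If $t\ge 2$ we are in case (ii) of Theorem \ref{main} with this maximal $s=t$: the bound $k\ge 2\ord-match(G^{\pi})-2s+2 = 2t-2t+2 = 2$ gives
$$
{\rm depth}\big(S/J(G^{\pi})^{(k)}\big) = (n+m)-\ord-match(G^{\pi})-1 = n+m-\alpha(G)-1
$$
for all $k\ge 2$. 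Wait — this is the $k=1$ value stated in the corollary, not the $k\ge 2$ value. So the computation shows instead that the \emph{stable} depth equals $n+m-\alpha(G)-1$; one then needs the separate input that ${\rm depth}(S/J(G^{\pi})) = n-1$ at $k=1$ and that $n-1 \ne n+m-\alpha(G)-1$ in general. Let me reread the statement: the corollary asserts depth $=n+m-\alpha(G)-1$ when $k=1$ and $=n-1$ when $k\ge 2$. So actually the stable value must be $n-1$, forcing $\ord-match(G^{\pi})+1 = m+1$, i.e. $\ord-match(G^{\pi})=m$. Hence the correct claim is $\ord-match(G^{\pi})=m$ (not $\alpha(G)$), and the corrected first step is: $\big\{\{x_i,y_i\}\mid W_i\ne\emptyset\big\}$ — no, we need size exactly $m$ — choose any $x^{(i)}\in W_i$ for each $i$ (assuming each block nonempty; isolated handling otherwise), giving a matching $\{\{x^{(i)},y_i\}\mid 1\le i\le m\}$ of size $m$ with independent $x$-side? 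The $x^{(i)}$ need not be independent in $G$. So that matching is \emph{not} ordered. This is exactly the subtlety.

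Here is the corrected plan. I would show $\ord-match(G^{\pi})=m$ as follows. Upper bound: $\alpha(G^{\pi})=m$ (shown above via the $m$ cliques $W_i\cup\{y_i\}$ and the independent set $\{y_1,\dots,y_m\}$), and $\ord-match\le\alpha$ always, so $\ord-match(G^{\pi})\le m$. Lower bound: I need an ordered matching of size $m$. Take $a_i=y_i$ and $b_i=$ some vertex of $W_i$ chosen so that condition (2) of Definition \ref{om} holds; since the $y_i$ are pairwise nonadjacent, condition (1) (independence of $\{y_1,\dots,y_m\}$) is automatic, and condition (2) reads: $\{y_i,b_j\}\in E(G^{\pi})$ $\Rightarrow i\le j$; but $y_i$ is adjacent only to $W_i$, so $\{y_i,b_j\}\in E(G^\pi)$ forces $b_j\in W_i$, i.e. $i=j$ (as the $W$'s partition $V(G)$), so condition (2) holds vacuously for any choice of $b_j\in W_j$. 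Thus any system of representatives $b_j\in W_j$ gives an ordered matching of size $m$ — and moreover an $m$-ordered matching, so $\mord-match(G^{\pi})=\ord-match(G^{\pi})=m$, and $s=m$ is the largest valid index (for $s>m$ there is no $s$-ordered matching at all, giving $-\infty$). Handling empty blocks: if some $W_i=\emptyset$ then $y_i$ is isolated in $G^\pi$; after deleting isolated vertices as permitted at the start of the proof of Theorem \ref{main}, $m$ is replaced by the number of nonempty blocks, and $\alpha(G)$ in the $k=1$ formula should be read accordingly — I would simply assume (as is standard for clique vertex-partitions) that every $W_i$ is nonempty, or remark on the reduction. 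Then: for $k\ge 2$, if $m\ge 2$ apply Theorem \ref{main}(ii) with $s=m$: since $k\ge 2 = 2m-2m+2 = 2\ord-match(G^{\pi})-2s+2$, we get ${\rm depth}(S/J(G^{\pi})^{(k)}) = (n+m)-m-1 = n-1$; the case $m=1$ (so $G$ itself is a clique, $G^\pi$ a cone) can be checked directly or noted that $\ord-match=1$ and Theorem \ref{main}(i) with $k\ge 1$ already gives $n-1$. The value at $k=1$, namely ${\rm depth}(S/J(G^{\pi})) = n+m-\alpha(G^{\pi})-1 = n+m-m-1 = n-1$? That again contradicts the stated $n+m-\alpha(G)-1$ unless $\alpha(G)=m$. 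Hmm: note $\alpha(G^\pi)=m$ but the corollary writes $\alpha(G)$. Indeed $\alpha(G)\le m$ and equality need not hold, so the $k=1$ case genuinely needs something finer than what Theorem \ref{main} provides: I would compute ${\rm depth}(S/J(G^{\pi}))$ via the Alexander-dual/Terai route, ${\rm pd}(S/J(G^\pi)) = {\rm reg}(S/I(G^\pi))^{\vee}$-type correspondence, or directly cite that $S/J(H)$ for $H=G^\pi$ has a well-understood (indeed Cohen--Macaulay, by Cook--Nagel \cite{cn}, since $G^\pi$ is vertex-decomposable) cover ideal — in fact Cook--Nagel show $G^{\pi}$ is \emph{unmixed} and even Cohen--Macaulay, equivalently $S/J(G^\pi)$ is Cohen--Macaulay, so ${\rm depth}(S/J(G^\pi)) = \dim(S/J(G^\pi)) = (n+m) - {\rm ht}(J(G^\pi)) = (n+m) - (\text{min vertex cover size of } G^\pi) = (n+m) - ((n+m)-\alpha(G^\pi))$; but that still gives $\alpha(G^\pi)=m$, not $\alpha(G)$. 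I suspect the intended reading is $\alpha(G)$ computed in $G$ with the convention that forces $\alpha(G)=$ number of blocks; regardless, \textbf{the main obstacle I foresee is pinning down the $k=1$ value}: it does not follow from Theorem \ref{main} and must come from the Cohen--Macaulayness of $G^{\pi}$ (Cook--Nagel) combined with the height computation ${\rm ht}(J(G^\pi)) = \tau(G^\pi)$, the vertex cover number; I would carry this out by noting $\tau(G^\pi) = (n+m)-\alpha(G^\pi)$ and carefully relating $\alpha(G^\pi)$ to the stated quantity. All the $k\ge 2$ content, by contrast, is a clean application of Theorem \ref{main}(ii) once $\ord-match(G^\pi)=m=\mord-match(G^\pi)$ is established, which is the short combinatorial computation sketched above.
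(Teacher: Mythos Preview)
Your handling of the case $k\geq 2$ is, once you work through your own false starts, exactly the paper's argument: take $a_i=y_i$ and $b_i\in W_i$ to get an ordered matching of size $m$; observe that $\{y_i,b_j\}\in E(G^\pi)$ forces $i=j$, so this is an $m$-ordered matching and $\mord-match(G^\pi)=\ord-match(G^\pi)=m$; then Theorem \ref{main}(ii) with $s=m$ gives ${\rm depth}(S/J(G^\pi)^{(k)})=(n+m)-m-1=n-1$ for all $k\geq 2$.

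The genuine gap is the case $k=1$, and your proposed route through Cohen--Macaulayness fails. What Cook--Nagel establish is that the \emph{independence complex} of $G^\pi$ is vertex-decomposable, hence $S/I(G^\pi)$ is sequentially Cohen--Macaulay; this says nothing about $S/J(G^\pi)$. In fact, by Eagon--Reiner, $S/J(G^\pi)$ is Cohen--Macaulay if and only if its Alexander dual $I(G^\pi)$ has a linear resolution, i.e.\ ${\rm reg}(S/I(G^\pi))=1$, which happens only when $\alpha(G)=1$. You also miscompute the height: the minimal primes of $J(G^\pi)=\bigcap_{e\in E(G^\pi)}(x_i,x_j)$ are the edge ideals $(x_i,x_j)$, so ${\rm ht}(J(G^\pi))=2$, not $\tau(G^\pi)$; the formula ${\rm ht}=\tau$ applies to $I(G^\pi)$, not $J(G^\pi)$. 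The paper's actual argument for $k=1$ is a one-line appeal to Biermann--Van Tuyl \cite[Theorem 13]{bv}, which gives ${\rm reg}(S/I(G^\pi))=\alpha(G)$; then Terai's theorem yields ${\rm pd}(S/J(G^\pi))=\alpha(G)+1$, and Auslander--Buchsbaum gives ${\rm depth}(S/J(G^\pi))=(n+m)-\alpha(G)-1$. So the appearance of $\alpha(G)$ rather than $\alpha(G^\pi)=m$ in the statement is correct and comes from this regularity computation, not from any hidden convention.
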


\begin{proof}
We know from \cite[Theorem 13]{bv} that ${\rm reg}(S/I(G^{\pi}))=\alpha(G)$. Therefore, Terai's theorem \cite[Theorem 8.1.10]{hh} implies that ${\rm pd}(S/J(G^{\pi}))=\alpha(G)+1$. Since $|V(G^{\pi})|=n+m$, the Auslander-Buchsbaum formula implies the assertion for $k=1$.

Next, we consider the case $k\geq 2$. We use the notations introduced just before the corollary. For every integer integer $i$ with $1\leq i\leq m$, choose a vertex $w_i\in W_i$. One can easily see that $\ord-match(G^{\pi})=m$ and$$M=\big\{\{w_i,y_i\}\mid 1\leq i\leq m\big\}$$is an ordered matching of size $m$ in $G^{\pi}$. Since for every pair of integers $i\neq j$, we have $\{w_i,y_j\}\notin E(G^{\pi})$, we deduce that $M$ is a $m$-ordered matching of $G$. In particular, $\mord-match(G^{\pi})=\ord-match(G^{\pi})$. The assertion now follows from Theorem \ref{main} by noticing that $|V(G^{\pi})|=n+m$.
\end{proof}

Katzman \cite{k} proves that for any graph $G$, we have$${\rm reg}(S/I(G))\geq \ind-match(G).$$
It is known that for several classes of graphs, including chordal graphs \cite[Corollary 6.9]{hv}, Sequentially Cohen-Macaulay bipartite graphs \cite[Theorem 3.3]{va} and very well-covered graphs \cite[Theorem 4.12]{mmcrty}, equality holds in the above inequality. In the following corollary, using the proof of Theorem \ref{main}, we provide a new class of graphs $G$ for which we have equality in the above mentioned inequality.

\begin{cor} \label{regind}
Let $G$ be a graph with $n$ vertices and suppose that $s\geq 1$ is the largest integer such that $\sord-match(G)=\ord-match(G)$. Also, let $G_k$ be the graph introduced in Construction \ref{cons}.
\begin{itemize}
\item[(i)] If $s=1$, then for every integer $k\geq 2\ord-match(G)-1$, we have$${\rm reg}(I(G_k))=\ind-match(G_k)+1=\ord-match(G)+1.$$

\item[(ii)] If $s\geq 2$, then for every integer $k\geq 2\ord-match(G)-2s+2$, we have$${\rm reg}(I(G_k))=\ind-match(G_k)+1=\ord-match(G)+1.$$
\end{itemize}
\end{cor}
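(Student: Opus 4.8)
The plan is to extract the regularity statement from the machinery already assembled in the proof of Theorem \ref{main}. The key observation is that that proof actually establishes two facts about $G_k$ for the relevant range of $k$: first, that $\ind\text{-}match(G_k) \geq \ord\text{-}match(G)$ (via the explicit induced matching $M$), and second, implicitly through Terai's theorem and the depth computation, an upper bound on $\reg(I(G_k))$. So the first step is to record that $\reg(S/I(G_k)) = \pd(S/J(G_k)) - 1$ by Terai's theorem \cite[Theorem 8.1.10]{hh}, together with the identification $J(G_k) = (J(G)^{(k)})^{\mathrm{pol}}$ from Construction \ref{cons}, which gives $\pd(S/J(G_k)) = \pd(S/J(G)^{(k)})$ since polarization preserves projective dimension.

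Next I would invoke Theorem \ref{main} itself: for $k$ in the stated range, $\depth(S/J(G)^{(k)}) = n - \ord\text{-}match(G) - 1$, and since $J(G)^{(k)}$ lives in a polynomial ring with $n$ variables, Auslander--Buchsbaum gives $\pd(S/J(G)^{(k)}) = \ord\text{-}match(G) + 1$. Chaining these equalities yields $\reg(S/I(G_k)) = \ord\text{-}match(G)$, i.e., $\reg(I(G_k)) = \ord\text{-}match(G) + 1$. This handles the "outer" equality in the corollary. For the middle term, Katzman's inequality \cite{k} gives $\reg(I(G_k)) \geq \ind\text{-}match(G_k) + 1$, hence $\ind\text{-}match(G_k) \leq \ord\text{-}match(G)$; combined with the reverse inequality $\ind\text{-}match(G_k) \geq \ord\text{-}match(G)$ established in the proof of Theorem \ref{main} via the induced matching $M$, we get $\ind\text{-}match(G_k) = \ord\text{-}match(G)$, and all three quantities coincide. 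Part (i) is the same argument using Theorem \ref{main}(i) (equivalently \cite[Theorem 3.4]{hktt}) in place of part (ii).

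The main subtlety — and the reason the corollary is phrased as a consequence of the \emph{proof} rather than the statement of Theorem \ref{main} — is that one must be careful about which inequalities are genuinely available. The lower bound $\reg(S/I(G_k)) \geq \ind\text{-}match(G_k)$ is Katzman; the construction of $M$ in the proof of Theorem \ref{main} gives $\ind\text{-}match(G_k) \geq \ord\text{-}match(G)$; and the depth lower bound $\depth(S/J(G)^{(k)}) \geq n - \ord\text{-}match(G) - 1$ (from \cite[Proposition 2.4]{v} and \cite[Theorem 2.8]{cv}) translates via Terai into $\reg(S/I(G_k)) \leq \ord\text{-}match(G)$. These squeeze $\reg(S/I(G_k))$ and $\ind\text{-}match(G_k)$ simultaneously to $\ord\text{-}match(G)$. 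I expect the only real bookkeeping issue to be tracking the $-1$ shifts correctly between $\reg(I(G_k))$ and $\reg(S/I(G_k))$, and between $\pd$ and $\depth$; there is no new geometric or combinatorial input needed beyond what the proof of Theorem \ref{main} already contains.
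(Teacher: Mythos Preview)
Your argument is correct and matches the paper's own proof essentially step for step: invoke Theorem \ref{main} for the depth, convert via Auslander--Buchsbaum and polarization to $\pd(T/J(G_k))$, apply Terai to get $\reg(I(G_k))=\ord\text{-}match(G)+1$, and then squeeze $\ind\text{-}match(G_k)$ between Katzman's bound and the explicit induced matching from the proof of Theorem \ref{main}.

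One small point to tighten for part (i): the induced matching $M$ in the proof of Theorem \ref{main} is only constructed there for $s\geq 2$; the case $s=1$ is dispatched by citing \cite[Theorem 3.4]{hktt}, which gives the depth statement but not an induced matching in $G_k$. The paper handles this by invoking the proof of \cite[Theorem 3.1]{s5} for the inequality $\ind\text{-}match(G_k)\geq \ord\text{-}match(G)$ when $s=1$. You should either cite that source as well, or note explicitly that the construction of $M$ in Theorem \ref{main} (with $s=1$ substituted into the indices) still yields an induced matching for $k\geq 2\ord\text{-}match(G)-1$; the latter is true but requires rechecking the index inequalities in that boundary case.
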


\begin{proof}
Set $t:=\ord-match(G)$. We first prove (ii). So, suppose $s\geq 2$ and let $k$ be an integer with $k\geq 2\ord-match(G)-2s+2$. We know from Theorem \ref{main} that ${\rm depth}(S/J(G)^{(k)})=n-t-1$. Consequently, the Auslander-Buchsbaum formula implies that ${\rm pd}(S/J(G)^{(k)})=t+1$. As polarization preserves the projective dimension, we have$${\rm pd}(T/(J(G)^{(k)})^{{\rm pol}})=t+1,$$where $T$ is a new polynomial ring containing $(J(G)^{(k)})^{{\rm pol}}$. We know from \cite[Lemma 3.4]{s3} that $(J(G)^{(k)})^{{\rm pol}}$ is the cover ideal of $G_k$. Using Terai's theorem \cite[Theorem 8.1.10]{hh}, we deduce that that ${\rm reg}(I(G_k))= t+1$.

It follows from \cite[Lemma 2.2]{k} that$${\rm reg}((I(G_k))\geq \ind-match(G_k)+1.$$On the other hand in the proof of Theorem \ref{main}, we showed that $\ind-match(G_k)\geq t$. Thus,$$t+1={\rm reg}(I(G_k))\geq \ind-match(G_k)+1\geq t+1$$and the assertion follows.

The proof of (i) is similar to that of (ii). The only difference is that one should use the proof of \cite[Theorem 3.1]{s5} to conclude the inequality $\ind-match(G_k)\geq t$.
\end{proof}

\begin{rem}
It is known that for any graph $G$, the inequality$${\rm reg}(I(G))\leq \ord-match(G)+1$$holds (see for instance \cite[Remark 4.8]{cv} or \cite[Corollary 2.5]{s4}). Corollary \ref{regind} provides an alternative proof for this inequality. Indeed, let $k\gg 0$ be an integer. Note that $G$ is an induced subgraph $G_k$. We know from \cite[Lemma 3.1]{h} that if $H'$ is an induced subgraph of a graph $H$, then ${\rm reg}(I(H'))\leq {\rm reg}(I(H))$. Thus, Corollary \ref{regind} implies that$${\rm reg}(I(G))\leq {\rm reg}(I(G_k))=\ord-match(G)+1.$$
\end{rem}

Hang and Trung \cite{ht} proved that for any bipartite graph $G$, we have ${\rm dstab}(J(G))\leq \ord-match(G)$. In the following theorem, we provide an alternative proof for their result.

\begin{thm} [\cite{ht}, Theorem 3.6] \label{bipart}
Let $G$ be a bipartite graph with $n$ vertices. Then for every integer $k\geq \ord-match(G)$, we have$${\rm depth}(S/J(G)^k)=n-\ord-match(G)-1.$$In particular, ${\rm dstab}(J(G))\leq \ord-match(G)$.
\end{thm}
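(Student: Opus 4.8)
The plan is to mimic the polarization strategy used in the proof of Theorem~\ref{main}, but now exploiting the fact that for a bipartite graph $G$ one has $J(G)^{(k)}=J(G)^k$ (\cite[Corollary 2.6]{grv}), so that the symbolic and ordinary powers coincide and $(J(G)^k)^{\mathrm{pol}}=(J(G)^{(k)})^{\mathrm{pol}}=J(G_k)$ by Construction~\ref{cons}. Since polarization preserves projective dimension and hence (via Auslander--Buchsbaum) depth, it suffices to show $\mathrm{pd}(T/J(G_k))=\ord\text{-}\mathrm{match}(G)+1$ for $k\geq \ord\text{-}\mathrm{match}(G)$, equivalently, by Terai's theorem \cite[Theorem~8.1.10]{hh}, that $\mathrm{reg}(S/I(G_k))=\ord\text{-}\mathrm{match}(G)$, i.e. $\mathrm{reg}(I(G_k))=\ord\text{-}\mathrm{match}(G)+1$.

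The lower bound $\mathrm{depth}(S/J(G)^k)\geq n-\ord\text{-}\mathrm{match}(G)-1$ is already available from \cite[Proposition~2.4]{v} combined with \cite[Theorem~2.8]{cv} (these give the bound for all symbolic powers, which here equal the ordinary ones), so the content is the reverse inequality, i.e. the lower bound $\mathrm{reg}(I(G_k))\geq \ord\text{-}\mathrm{match}(G)+1$ for the smaller range $k\geq \ord\text{-}\mathrm{match}(G)$. By Katzman's inequality \cite[Lemma~2.2]{k}, $\mathrm{reg}(I(G_k))\geq \ind\text{-}\mathrm{match}(G_k)+1$, so it is enough to exhibit an induced matching of size $t:=\ord\text{-}\mathrm{match}(G)$ in $G_k$ using only $k\geq t$ copies of each vertex. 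Write an ordered matching of $G$ as $\{\{x_i,x_{t+i}\}\mid 1\leq i\leq t\}$ with $\{x_1,\dots,x_t\}$ independent and $\{x_i,x_{t+j}\}\in E(G)\Rightarrow i\leq j$. I would take
\[
M:=\big\{\{x_{i,\,t+1-i},\ x_{t+i,\,k+i-t}\}\mid 1\leq i\leq t\big\},
\]
noting that for $k\geq t$ every second index is a legitimate copy ($1\leq t+1-i\leq t\leq k$ and $1\leq k+i-t\leq k$), and the pairs are vertex-disjoint. The verification that $M$ is induced splits, for $1\leq i<j\leq t$, into checking the four potential non-edges: $\{x_{i,t+1-i},x_{j,t+1-j}\}$ fails because $x_i,x_j$ are not adjacent in $G$; $\{x_{j,t+1-j},x_{t+i,k+i-t}\}$ fails because $j>i$ forces $\{x_j,x_{t+i}\}\notin E(G)$; $\{x_{t+i,k+i-t},x_{t+j,k+j-t}\}$ fails because $(k+i-t)+(k+j-t)=2k-2t+i+j\geq i+j\geq k+2$ once $k\geq t$ and $i+j\geq 3$ — here I must double-check the edge cases $i+j\le 2$, which only occurs for $i=j=1$, excluded since $i<j$; and finally $\{x_{i,t+1-i},x_{t+j,k+j-t}\}$ fails because $(t+1-i)+(k+j-t)=k+1+(j-i)\geq k+2$ when $j>i$. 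This exploits the ``bipartite'' strengthening only implicitly — the real gain over Theorem~\ref{main} is that the weights $t+1-i$ and $k+i-t$ are chosen tighter, which forces the fourth inequality to use $j-i\geq 1$ rather than $j-i\geq s$.

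The main obstacle I anticipate is exactly this fourth case: making the non-edge condition $\{x_{i,t+1-i},x_{t+j,k+j-t}\}\notin E(G_k)$ work for \emph{all} $k\geq t$ rather than needing $k$ large, and simultaneously keeping the first-coordinate copies within range $[1,k]$; the two requirements pull in opposite directions and the choice above is the unique balance point. If the naive index assignment does not quite give $\ind\text{-}\mathrm{match}(G_k)\geq t$ in the full range $k\geq t$, the fallback is to invoke the hypothesis that $G$ is bipartite more directly: in a bipartite graph the edges $\{x_i,x_{t+j}\}$ with $i\le j$ can be organized so that $G_k$ contains a ``staircase'' induced subgraph, and one can cite or adapt the argument of \cite[Theorem~3.1]{s5} (already used in Corollary~\ref{regind}) to obtain the induced matching; once $\ind\text{-}\mathrm{match}(G_k)\geq t$ is in hand, Katzman's bound, Terai's theorem, polarization, and Auslander--Buchsbaum chain together to give $\mathrm{depth}(S/J(G)^k)\leq n-t-1$, completing the proof, and the statement $\mathrm{dstab}(J(G))\leq \ord\text{-}\mathrm{match}(G)$ follows immediately since this matches the stable value.
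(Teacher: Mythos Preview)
Your overall strategy matches the paper's exactly: same polarization/Terai/Katzman reduction and the very same candidate induced matching $M=\{\{x_{i,t+1-i},x_{t+i,k+i-t}\}\mid 1\le i\le t\}$. Three of your four non-edge checks are correct and identical to the paper's.

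The gap is the third check, for $\{x_{t+i,k+i-t},x_{t+j,k+j-t}\}$. Your inequality chain $2k-2t+i+j\ge i+j\ge k+2$ is wrong: the step $i+j\ge k+2$ fails as soon as $k\ge 2t$ (and in fact the full inequality $2k-2t+i+j\ge k+2$ rearranges to $k\ge 2t+2-i-j$, which for $i+j=3$ gives $k\ge 2t-1$, recovering only the general bound of Theorem~\ref{main}(i), not the bipartite bound $k\ge t$). So the index argument alone cannot rule out this edge in the range $t\le k<2t-1$, and your remark that bipartiteness is used ``only implicitly'' is where the proposal goes astray.

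The paper fixes exactly this case by invoking bipartiteness \emph{explicitly}: by \cite[Lemma~3.4]{ht} one may choose the ordered matching so that $\{x_{t+1},\dots,x_{2t}\}$ is also an independent set of $G$. Then $x_{t+i}$ and $x_{t+j}$ are non-adjacent in $G$, hence $\{x_{t+i,k+i-t},x_{t+j,k+j-t}\}\notin E(G_k)$ for trivial reasons, with no index inequality needed. Once you insert this one sentence, your argument is complete and coincides with the paper's.
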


\begin{proof}
Suppose $V(G)=\{x_1, \ldots, x_n\}$ and assume without loss of generality that $G$ has no isolated vertex. Since $G$ is a bipartite graph, it follows from \cite[Corollary 2.6]{grv} that $J(G)^{(k)}=J(G)^k$, for each integer $k\geq 1$. Thus, combining \cite[Proposition 2.4]{v} and \cite[Theorem 2.8]{cv}, we have$${\rm depth}(S/J(G)^k)={\rm depth}(S/J(G)^{(k)})\geq n-\ord-match(G)-1,$$for every $k\geq 1$. Hence, we only need to prove that$${\rm depth}(S/J(G)^k)\leq n-\ord-match(G)-1,$$for every integer $k\geq \ord-match(G)$. According to  Auslander-Buchsbaum formula, we need to show that ${\rm pd}(S/J(G)^k)\geq  \ord-match(G)+1$. Since polarization does not change the projective dimension, we prove that$${\rm pd}(T/(J(G)^k)^{{\rm pol}})\geq \ord-match(G)+1,$$where $T$ is a new polynomial ring containing $(J(G)^k)^{{\rm pol}}$. We know from \cite[Lemma 3.4]{s3} that $(J(G)^k)^{{\rm pol}}=(J(G)^{(k)})^{{\rm pol}}$ is the cover ideal of the graph $G_k$. Using Terai's theorem \cite[Theorem 8.1.10]{hh}, it is sufficient to show that $${\rm reg}(T/(I(G_k))\geq \ord-match(G).$$By \cite[Lemma 2.2]{k}, we know that$${\rm reg}(T/(I(G_k))\geq \ind-match(G_k).$$We prove that $\ind-match(G_k)\geq \ord-match(G)$ and this completes the proof of the theorem. Set $t=\ord-match(G)$ and assume without loss of generality that$$\big\{\{x_i,x_{t+i}\}\mid 1\leq i\leq t\big\}$$is an ordered matching of $G$ so that
\begin{itemize}
\item[$\bullet$] $\{x_1, \ldots, x_t\}$ is an independent subset of
     vertices of $G$.
\item[$\bullet$] $\{x_i, x_{t+j}\}\in E(G)$ implies that $i\leq j$.
\end{itemize}
Moreover, using \cite[Lemma 3.4]{ht}, we may assume that $\{x_{t+1}, x_{t+2} \ldots, x_{2t}\}$ is an independent subset of vertices of $G$.

Consider the set of edges$$M=\big\{\{x_{i,t+1-i}, x_{t+i, k+i-t}\}\mid 1\leq i\leq t\big\}.$$Note that since $k\geq t$, for each integer $i$ with $1\leq i\leq t$, we have$$1\leq t+1-i, k+i-t\leq k.$$Therefore, $x_{i,t+1-i}$ and $x_{t+i, k+i-t}$ are vertices of $G_k$.

We show that $M$ is an induced matching of $G_k$.   Fix integers $i$ and $j$ with $1\leq i <j\leq t$. As $\{x_1, \ldots, x_t\}$ is an independent subset of
vertices of $G$, the vertices $x_{i,t+1-i}$ and $x_{j,t+1-j}$ are not adjacent in $G_k$. Similarly, $x_{t+i,k+i-t}$ and $x_{t+j,k+j-t}$ are not adjacent in $G_k$. Since, $i<j$, we have $\{x_j, x_{t+i}\}\notin E(G)$ and consequently, $\{x_{j,t+1-j}, x_{t+i, k+i-t}\}\notin E(G_k)$. On the other hand,$$(t+1-i)+(k+j-t)=k+j-i+1>k+1,$$which implies that $\{x_{i,t+1-i}, x_{t+j, k+j-t}\}$ is not an edge of $G_k$.

Hence, $M$ is an induced matching of $G_k$ which yields the inequality$$\ind-match(G_k)\geq t=\ord-match(G),$$and this completes the proof.
\end{proof}

\section*{Acknowledgment}

The first author is supported by the FAPA grant from the Universidad de los Andes.



\end{document}